\newtheorem{theorem}{Theorem}
\newtheorem{cor}[theorem]{Corollary}
\newtheorem*{MainTheorem}{Main Theorem}
\newtheorem*{RuTheorem}{Ru's Theorem}
\newtheorem*{FirstMainTheorem}{First Main Theorem}
\theoremstyle{definition}
\def\PP{\mathbb P}
\def\ord{\text{ord}}
\def\K{{\bf K}}
\begin{document}
\title[Non-Archimedean Defect Relation]
{A defect relation for non-Archimedean analytic curves in
arbitrary projective varieties}
\author{Ta Thi Hoai An}
\address{Institute of Mathematics\\
18 Hoang Quoc Viet, Cau Giay\\
Hanoi, VietNam}
\email{tthan@math.ac.vn}
\thanks{Financial support provided to the author as a Junior Associate
by ICTP, Trieste, Italy and by SIDA} \subjclass[2000]{12E05,
11S80, 30D25}
\begin{abstract}
If $f$ is a non-Archimedean analytic curve in a
projective variety $X$ embedded in $\PP^N$
and if $D_1,\dots,D_q$ are hypersurfaces of $\PP^N$ in general position
with $X,$ then we prove the defect relation:
$$
    \sum_{j=1}^q \delta(f,D_j) \le \dim X.
$$
\end{abstract}
\maketitle


\section{introduction} Nevanlinna theory is a generalization of the
fundamental theorem of algebra to meromorphic functions over the
complex numbers, or more generally to meromorphic maps between
complex spaces.  In the past few decades, there has been interest
in a non-Archimedean Nevanlinna theory, \textit{cf.,} \cite{HY}.

Classical Nevanlinna theory for meromorphic functions
over the complex numbers
consists of two fundamental
inequalities.  The first main theorem, which is a reformulation
of the Poisson-Jensen formula,  provides an upper bound
on the frequency with which a meromorphic function can take on
a complex value in terms of the growth order of the function.
The deeper second main theorem provides a lower bound.
At the same time, the second main theorem gives an upper-bound
on the ramification, \textit{i.e.,} the frequency that the function can take
on values with multiplicity greater than one.
The first main theorem can easily be extended to the general case
of meromorphic mappings between complex spaces.
However, analogs of the second main theorem remain largely
conjectures when the target is something other than a projective
space or a semi-Abelian variety.

In \cite{R}, Ru observed that in stark contrast to the situation
over the complex numbers, over non-Archimedean ground fields,
if one drops the ramification term from the second main theorem,
then it follows from the first main theorem.  Using this observation,
Ru gave a simple proof of an inequality of second main theorem type,
without ramification, for
curvilinear hypersurfaces in general position in projective space.
Note that Ru states his results using the concrete ground field
$\mathbb{C}_p,$ the $p$-adic complex numbers.

In this note, we extend Ru's result to arbitrary subvarieties
of projective space and remark that the argument works over any
algebraically closed field complete with respect to a non-Archimedean
absolute value, even in positive characteristic.

\section{Notation and Statement of Results}
Let ${\bf K}$ be an algebraically closed field of arbitrary characteristic,
complete with respect to a non-Archimedean absolute value $|~|.$
If
$$
    f(z)=\sum_{j=0}^\infty a_n z^n
$$
is an entire function on $\K,$ then for each real number $r\ge0,$
we define
$$
    |f|_r = \sup_j |a_j|r^j = \sup \{|f(z)| : z \in \K
    \textnormal{~with~} |z|\le r\}
    = \sup \{|f(z)| : z \in \K \textnormal{~with~} |z|=r\}.
$$
Let $f:\K\to \PP^N(\K)$ be a non-constant
analytic curve in projective space. Let
$\tilde{f}=(f_0,...,f_N)$ be a reduced representative of $f$,
where $f_0,...,f_N$ are entire functions on $\K$ without
common zeros, at least one of which is non-constant.

The Nevanlinna characteristic function $T_f(r)$ is
defined by $T_f(r)=\log ||f||_r,$ where
$$
    ||f||_r=\max\{|f_0|_r,...,|f_N|_r\}.
$$
The above definition of $T_f(r)$ is independent, up to an additive
constant, of the choice of the reduced representation of $f$.

Let $D$ be a
hypersurface in $\PP^N(\K)$ of degree $d$. Let $Q$ be the
homogeneous polynomial (form) in $N+1$ variables with coefficients
in $\K$ defining $D$. We consider the entire function $Q\circ
f=Q(f_0,...,f_N)$ on $\K$. Let $n_f(r,Q)$ be the number of zeros
of $Q\circ f$  in the (closed) disk $\mathbf{B}[r]=\{z \in \K : |z| \le r\},$
counting multiplicity. If
$Q\circ f\not\equiv 0$, set
\begin{eqnarray*}
N_f(r,D) = N_f(r,Q)&=&\int_0^r\frac{n_f(t,Q)-n_f(0,Q)}{t}dt+n_f(0,Q)\log r;\\
m_f(r,D) = m_f(r,Q)&=&\log\frac{||f||^d_r}{|Q\circ f|_r}.
\end{eqnarray*}
Note that up to a constant term, $m_f(r,D)$ is independent of the
choice of defining form $Q.$
The functions $N_f(r,D)$ and $m_f(r,D)$ are referred to as the counting
and proximity functions, respectively.

A collection of $q\ge N+1$ hypersurfaces $D_1,\dots,D_q$ with
associated defining forms $Q_1,\dots,Q_q,$ is said to be
\textit{in general position} in $\PP^N(\K)$ if for any subset
$\{i_0,\dots,i_N\}$ of $\{1,\dots,q\}$ of cardinality $N+1,$ $\{x
\in \PP^N(\K) : Q_{i_j}(x)=0,\; j=0,\dots,N\}=\emptyset.$
Ru~\cite{R} proved the following

\begin{RuTheorem} Let $D_1,\dots,D_q$ be hypersurfaces
of $\PP^N(\K)$ in general position. Let $f$ be a non-constant
analytic curve from $\K$ to $\PP^N(\K)$ whose image is not
completely contained in any of the hypersurfaces $D_1,\dots,D_q.$
Then for any positive real number~$r,$
$$
    \sum_{j=1}^q \frac{m_f(r,D_j)}{\deg D_j} \le NT_f(r)+O(1),
$$
where $O(1)$ is a constant independent of $r.$
\end{RuTheorem}

In this note, we show that if the image of $f$ is contained in an
$n$-dimensional subvariety $X,$ then the $N$ on the right hand
side of the inequality in Ru's theorem can be replaced by the
dimension of $X.$  Before stating the theorem, we need to
generalize the definition of ``in general position''.

Let $X$ be an $n$-dimensional (not necessarily smooth) projective
subvariety of $\PP^N(\K).$  A collection of $q\ge n+1$ hypersurfaces
$D_1,\dots,D_q$ in $\PP^N(\K)$ is said to be
\textit{in general position with $X$}
if for any subset
$\{i_0,\dots,i_n\}$ of $\{1,\dots,q\}$ of cardinality $n+1,$
$\{x \in X : Q_{i_j}(x)=0,\; j=0,\dots,n\}=\emptyset.$

\begin{MainTheorem}
Let $X\subset \PP^N(\K)$
be a projective sub-variety of dimension $n\ge 1$ over $\K$.
Let $D_1, .., D_q$ be hypersurfaces in  $\PP^N(\K)$
in general position with $X$. Let $f:\K\to X$ be a
non-constant analytic map whose image is not completely contained
in any of the hypersurfaces $D_1,\dots,D_q.$
Then, for any positive real number $r,$
$$\sum_{j=1}^q\frac{m_f(r, D_j)}{\deg D_j}\leq nT_f(r) +O(1),$$
where $O(1)$ is a constant independent of $r$.
\end{MainTheorem}

We remark that the hypersurfaces $D_j$ are hypersurfaces of the
ambient projective space $\PP^N(\K)$ and not arbitrary effective
divisors of $X.$  In more intrinsic terms, we are essentially
considering very ample effective divisors in general position on
$X.$

We note that Evertse and Ferretti \cite{EF} have proven
    a height inequality that is the arithmetic analog of the inequality
    in our main theorem here.

Define the \textit{defect} $\delta_f(D_j)$ of the hypersurface $D_j$
with respect to the map $f$ by
$$
    \delta_f(D_j) =
    \liminf_{r\to\infty}\frac{m_f(r,D_j)}{(\deg D_j) T_f(r)}.
$$
With the defect notation, our main theorem immediately results in
the following corollary.
\begin{cor}\label{cor1} With notation and assumptions as in the theorem,
$$
    \sum_{j=1}^q\delta_f(D_j)\le n=\dim X.
$$
\end{cor}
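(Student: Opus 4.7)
The corollary reduces to a near-trivial deduction from the Main Theorem: divide the inequality by $T_f(r)$ and pass to the $\liminf$. The only substantive points to check are that $T_f(r)\to\infty$ (so the additive $O(1)$ error washes out in the limit) and that the $\liminf$ of a finite sum dominates the sum of the $\liminf$s.

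First I would verify $T_f(r)=\log\|f\|_r\to\infty$ as $r\to\infty$. Since $\tilde f=(f_0,\dots,f_N)$ is reduced with at least one non-constant entry $f_i=\sum_n a_n z^n$, there is some index $n\ge 1$ with $a_n\ne 0$, and the very definition of $|\cdot|_r$ gives $|f_i|_r\ge|a_n|r^n\to\infty$, so $\|f\|_r\to\infty$. I would also observe that each summand $a_j(r):=m_f(r,D_j)/((\deg D_j)\,T_f(r))$ is bounded below: the non-Archimedean Gauss estimate $|Q_j\circ f|_r\le C_j\|f\|_r^{\deg D_j}$, with $C_j$ a constant depending only on the coefficients of $Q_j$, gives $m_f(r,D_j)\ge -\log C_j$, so $a_j(r)\ge -\log C_j/T_f(r)\to 0$. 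In particular there is no $\infty-\infty$ ambiguity in what follows.

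For $r$ large enough that $T_f(r)>0$, dividing the Main Theorem by $T_f(r)$ yields
$$
    \sum_{j=1}^q a_j(r)\;\le\;n+\frac{O(1)}{T_f(r)}.
$$
The right-hand side tends to $n$ as $r\to\infty$. On the left, the standard super-additivity of $\liminf$ (for any finite family of functions bounded below) gives
$$
    \sum_{j=1}^q\delta_f(D_j)\;=\;\sum_{j=1}^q\liminf_{r\to\infty}a_j(r)
    \;\le\;\liminf_{r\to\infty}\sum_{j=1}^q a_j(r)\;\le\;n,
$$
which is the claim. There is no real obstacle: the analytic and geometric content is entirely packaged in the Main Theorem, and the passage from the pointwise inequality in $r$ to the defect inequality is purely formal.
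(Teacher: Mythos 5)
Your argument is correct and is exactly the deduction the paper intends: the paper states Corollary~\ref{cor1} as an immediate consequence of the Main Theorem without written proof, and your verification that $T_f(r)\to\infty$, that each normalized proximity term is bounded below (so no $\infty-\infty$ issue), and the super-additivity of $\liminf$ supplies precisely the routine details being suppressed. Nothing further is needed.
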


William Cherry has suggested to us that we can use the idea
of \cite[Theorem~6.1]{CY} to strengthen this to
\begin{cor}\label{cor2} With the assumptions and notation as in
the theorem, the number of hypersurfaces $D_j$ such
that $\delta_f(D_j)>0$ is at most $n.$
\end{cor}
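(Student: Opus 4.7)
The plan is to argue by contradiction. Suppose, after relabeling, that $\delta_f(D_1),\ldots,\delta_f(D_{n+1}) > 0.$ By the general position hypothesis, the simultaneous vanishing locus $\{x \in X : Q_1(x) = \cdots = Q_{n+1}(x) = 0\}$ is empty in $\PP^N(\K).$

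The first step is the projective Nullstellensatz on $X.$ Since $Q_1,\ldots,Q_{n+1}$ have no common zero on $X,$ the homogeneous ideal they generate together with $I(X)$ contains $(x_0,\ldots,x_N)^k$ for some integer $k \ge 1.$ Hence for each $i\in\{0,\ldots,N\}$ there exist homogeneous polynomials $A_{i,j}$ of degree $k-d_j$ (where $d_j = \deg D_j$) and $R_i \in I(X)$ with
$$x_i^k = \sum_{j=1}^{n+1} A_{i,j}(x)\,Q_j(x) + R_i(x).$$
Substituting $x = \tilde{f}(z)$ and using $f(\K)\subset X$ to kill $R_i\circ \tilde{f},$ one obtains the functional identity
$$f_i^k = \sum_{j=1}^{n+1} (A_{i,j}\circ \tilde{f})\,(Q_j\circ \tilde{f})$$
of entire functions on $\K,$ for each $i=0,\ldots,N.$

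The second step converts the positive-defect assumption into size estimates. Set $\epsilon = \tfrac12 \min_{1\le j\le n+1}\delta_f(D_j) > 0.$ By the definition of $\delta_f,$ for all sufficiently large $r$ one has $m_f(r,D_j) \ge \epsilon\,d_j\,T_f(r)$ for every $j=1,\ldots,n+1,$ which, using the definitions of $m_f$ and $T_f,$ rearranges to $|Q_j\circ f|_r \le ||f||_r^{(1-\epsilon)d_j}.$ Since $A_{i,j}$ is homogeneous of degree $k-d_j,$ a routine coefficient bound gives $|A_{i,j}\circ f|_r \le C\,||f||_r^{k-d_j}$ for a constant $C$ independent of $r.$

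The argument closes with the non-Archimedean strong triangle inequality applied to the identity from step one: for all large $r,$
$$|f_i|_r^k \;\le\; \max_{1\le j\le n+1}|A_{i,j}\circ f|_r\,|Q_j\circ f|_r \;\le\; C\,||f||_r^{k-\epsilon\,d_{\min}},$$
where $d_{\min}=\min_j d_j.$ Maximizing over $i$ yields $||f||_r^{\epsilon\,d_{\min}} \le C$ for all large $r,$ which contradicts $||f||_r \to \infty$ (valid since $f$ is non-constant). The main obstacle will be correctly invoking an effective projective Nullstellensatz over the possibly singular variety $X$ and keeping track of the degrees of the $A_{i,j};$ once the polynomial identity is in hand, the ultrametric inequality converts the positive-defect bounds directly into a contradiction with the unboundedness of $||f||_r.$
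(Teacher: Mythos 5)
Your proof is correct and follows essentially the same route as the paper: translate the positive defects into the bounds $|Q_j\circ f|_r\le \|f\|_r^{(1-\epsilon)d_j}$ and contradict a Nullstellensatz-derived lower bound on $\|f\|_r$ via the ultrametric inequality, using that $\|f\|_r\to\infty$ for non-constant $f$. The only (harmless) difference is that you apply the Nullstellensatz directly to the forms $Q_1,\dots,Q_{n+1}$ of differing degrees, whereas the paper reuses the equal-degree powers $G_j=Q_j^{\Delta/d_j}$ and the inequality~(\ref{fDelta}) already established in the proof of the main theorem.
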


\section{Proof of the main theorem}

We first recall the Nevanlinna  first main theorem in
non-Archimedean fields (\textit{cf.,}~\cite{R}).

\begin{FirstMainTheorem} Let $f:\K\to
\PP^N(\K)$ be an analytic curve and  let $Q$ be a homogeneous form
of degree $d$. If $Q(f)\not\equiv 0$, then for every positive real number
$r,$
$$
    m_f(r,Q)+N_f(r,Q)=dT_f(r)+O(1),
$$
where $O(1)$ is a constant independent of $r$.
\end{FirstMainTheorem}

\begin{proof}[Proof of the Main Theorem]
For $i=1,\dots,q,$ let $Q_i$ be the homogeneous form in $\K[x_0,...,x_N]$ of
degree $d_i$ defining the hypersurface $D_i.$
Let
$\Delta$ be the lest common multiple of the $d_i$'s.
Then $G_i:=Q_i^{\Delta/d_i}$ are
homogeneous polynomials of the same degree $\Delta$.

For fixed $r,$ by rearranging the
indices if necessary, we may assume that
\begin{equation}
\label{indexorder}
    |G_1\circ f|_r\le |G_2 \circ f|_r\le\cdots\le|G_q \circ f|_r.
\end{equation}

Since $Q_1,...,Q_q$ are in general position with $X$ by assumption,
$G_1,...,G_q$ are also in general position with $X$. Therefore,
$$X\cap\{G_1=0, ..., G_{n+1}=0\}=\emptyset.$$
Applying Hilbert's Nullstellensatz (\textit{cf.,} \cite{W})
to the ideal generated by the forms defining $X$ and $G_1,\dots G_{n+1},$
we see that for any integer $k, 0\le k\le
N$, there is an integer $m_k\ge \Delta$ such that
$$x_k^{m_k}=\sum_{i=1}^{n+1}b_{ik}(x_0,...,x_N)G_i(x_0,...,x_N)
\qquad\textnormal{on~}X,$$
where $b_{ik},$ for $1\le i\le N, 0\le k\le N,$ are
homogeneous forms with coefficients in $\K$ of degree
$m_k-\Delta$. Hence
$$|f_k|_r^{m_k}\le C||f||_r^{m_k-\Delta}\max \{|G_1\circ f|_r, ...,
|G_{n+1}\circ f|_r\},$$
where $C$ is a positive constant that depends only on the coefficients
of the $b_{ik}.$
Therefore,
\begin{equation}
\label{fDelta}
    ||f||_r^{\Delta}\le C\max\{|G_1\circ f|_r, ..., |G_{n+1}\circ f|_r\}.
\end{equation}

Thus, continuing to keep $r$ fixed,
\begin{eqnarray*}
\Delta\sum_{j=1}^q\frac{m_f(r,Q_j)}{\deg Q_j}
\;=\;\sum_{j=1}^qm_f(r, G_j)&=&\sum_{j=1}^q \log
\frac{||f||_r^{\Delta}}{|G_j\circ f|_r}\\
&=&\sum_{j=1}^n \log
\frac{||f||_r^{\Delta}}{|G_j\circ f|_r}
+\sum_{j=n+1}^q\frac{||f||_r^{\Delta}}{|G_j\circ f|_r}\\
\textnormal{[from~(\ref{fDelta}) and~(\ref{indexorder})]}
&\le& \sum_{j=1}^n \log
\frac{||f||_r^{\Delta}}{|G_j\circ f|_r}
+ O(1)\\
&=&\sum_{j=1}^n m_f(r,G_j) + O(1) \\
\textnormal{[By the First Main Theorem]} &\le& n\Delta T_f(r) +
O(1).
\end{eqnarray*}
Because the right hand side of the inequality no longer depends on
the arrangement of the indices in~(\ref{indexorder}) and the $O(1)$ term
is independent of $r,$ dividing both sides through by
$\Delta$ gives the theorem for all positive $r.$
\end{proof}

\begin{proof}[Proof of Corollary~\ref{cor2}]
If the corollary were not true, then $n+1$ of the hypersurfaces
could have non-zero defect.  Thus, without loss of generality,
we may suppose $\delta_f(D_j)>0$
for $j=1,2,\dots,n+1.$  By definition, that means there exists
$\varepsilon>0$ such that for all
$r$ sufficiently large,
$$
    m_f(r,D_j) > \varepsilon(\deg D_j)T_f(r)\quad
    \textnormal{for~}j=1,\dots,n+1.
$$
Letting $G_j$ be as in the proof of our main theorem, we then have
for all $r$ sufficiently large,
$$
    m_f(r,G_j) > \varepsilon\Delta T_f(r)\quad
    \textnormal{for~}j=1,\dots,n+1.
$$
Hence, for all $r$ sufficiently large and for $j=1,\dots,n+1,$
$$
    |G_j\circ f|_r < ||f||_r^{\Delta(1-\varepsilon)}.
$$
Because $f$ is non-constant, this contradicts~(\ref{fDelta})
by applying the Nullstellensatz argument of the previous proof
to $G_1,\dots,G_{n+1}.$
\end{proof}

\section{Two Variations}
Using the slicing technique of \cite{CY}, we immediately get
\begin{theorem}Let $X\subset \PP^N(\K)$
be a projective sub-variety of dimension $n\ge 1$ over $\K$.
Let $D_1, .., D_q$ be hypersurfaces in  $\PP^N(\K)$
in general position with $X$. Let $f:\K^m\to X$ be a
non-constant analytic map whose image is not completely contained
in any of the hypersurfaces $D_1,\dots,D_q.$
Then, for any positive real number $r,$
$$\sum_{j=1}^q\frac{m_f(r, D_j)}{\deg D_j}\leq nT_f(r) +O(1),$$
where $O(1)$ is a constant independent of $r$.
\end{theorem}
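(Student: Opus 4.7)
The plan is to use the slicing technique of \cite{CY} to reduce the several-variable statement to the one-variable Main Theorem already proved. First I would extend the Nevanlinna setup to maps $f:\K^m\to \PP^N(\K)$: for an entire function $g:\K^m\to\K$ with power series $g(\u)=\sum_I a_I\u^I$, set $|g|_r=\sup_I |a_I|r^{|I|}$, which equals $\sup\{|g(\u)|:|\u|\le r\}$ with $|\u|=\max_i|u_i|$. A reduced representation $\tilde{f}=(f_0,\dots,f_N)$ is an $(N+1)$-tuple of entire functions on $\K^m$ with no common zero, and $T_f(r)=\log\|f\|_r$ and $m_f(r,D_j)$ are defined verbatim as in the one-variable case.

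For $\u\in\K^m$ with $|\u|=1$, the slice $f_\u:\K\to X$ defined by $f_\u(z)=f(z\u)$ is an analytic curve in $X$; since the $f_k$ have no common zero in $\K^m$, the tuple $(f_0(z\u),\dots,f_N(z\u))$ has no common zero in $\K$ and hence is a reduced representation of $f_\u$. The key non-Archimedean fact is that sup-norms are preserved by generic slicing: writing $g(z\u)=\sum_{k\ge 0}z^k p_k(\u)$ with $p_k(\u)=\sum_{|I|=k}a_I\u^I$, one has $|g_\u|_r=\sup_k|p_k(\u)|r^k\le |g|_r$, with equality \emph{for every $r\ge 0$ simultaneously} whenever $\u$ satisfies $|p_k(\u)|=\sup_{|I|=k}|a_I|$ for each $k$---a condition holding off a proper algebraic subset of $\{|\u|=1\}$. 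Applying this to the finite collection $\{f_0,\dots,f_N\}\cup\{Q_j\circ\tilde{f}\}_{j=1}^q$, one fixes a single $\u_0$, independent of $r$, with $\|f_{\u_0}\|_r=\|f\|_r$ and $|Q_j\circ f_{\u_0}|_r=|Q_j\circ f|_r$ for every $j$ and every $r\ge 0$, hence $T_{f_{\u_0}}(r)=T_f(r)$ and $m_{f_{\u_0}}(r,D_j)=m_f(r,D_j)$.

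Then $f_{\u_0}$ is non-constant (since $T_{f_{\u_0}}(r)\to\infty$) and its image meets no $D_j$ (since $Q_j\circ f_{\u_0}\not\equiv 0$), so the one-variable Main Theorem applies and yields the claim with an $O(1)$ error truly independent of $r$. The subtle point---the main obstacle in a naive slicing argument---is that a direction maximizing a single sup-norm typically depends on $r$, in which case the $O(1)$ from the one-variable theorem might drift as $r\to\infty$; the plan avoids this by exploiting the stronger ultrametric identity above, which lets one use the \emph{same} generic $\u_0$ for every $r$. As a clean alternative, one may dispense with slicing altogether and simply observe that the proof of the one-variable Main Theorem carries over verbatim in $m$ variables: the index rearrangement, the Nullstellensatz-based bound~(\ref{fDelta}), and the elementary First Main Theorem upper bound $m_f(r,G_j)\le\Delta T_f(r)+O(1)$ all use only algebraic facts and ultrametric properties of $|\cdot|_r$ that are insensitive to the number of variables.
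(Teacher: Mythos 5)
The gap is in your headline slicing step: the claim that ``$|p_k(\u)|=\sup_{|I|=k}|a_I|$ for each $k$'' holds off a proper algebraic subset of $\{|\u|=1\}$, so that one $\u_0$ serves all radii simultaneously. For each fixed $k$ the bad set is indeed the preimage under reduction of the zero locus of a nonzero polynomial over the residue field, but there is one such locus for every $k$, hence countably many, and when the residue field is countable --- e.g.\ $\K=\CC_p$ with residue field $\overline{\mathbb F}_p$, the case of main interest --- these countably many hypersurfaces can exhaust every residue class of the unit sphere. Concretely, for $m=2$ enumerate $\PP^1(\overline{\mathbb F}_p)$ as $\tilde q_1,\tilde q_2,\dots$, choose unit-coefficient linear forms $\ell_k$ whose reductions vanish at $\tilde q_k$, and set $g=\sum_k c_k\ell_k^k$ with $|c_k|=p^{-k^2}$; then every degree $k$ is a vertex of the valuation polygon (at $r=p^{2k}$ the degree-$k$ part strictly dominates), so for every $\u$ with $|\u|=1$, taking the $k$ with $[\tilde\u]=\tilde q_k$ gives $|g_\u|_{p^{2k}}<|g|_{p^{2k}}$. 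Thus a single direction giving norm equality for all $r$ need not exist, and choosing the direction radius-by-radius reintroduces exactly the drifting-$O(1)$ problem you yourself flagged, unless one additionally checks that the one-variable constants (the Nullstellensatz constant and the first-main-theorem constants for the slices) are uniform in the chosen directions.

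Your ``clean alternative,'' by contrast, is sound and is really all that is needed; it is also essentially what the paper does, since the paper offers no argument beyond invoking the slicing technique of \cite{CY}. The Gauss norm $|g|_r=\sup_I|a_I|r^{|I|}$ on $\K^m$ is ultrametric and multiplicative, so the Nullstellensatz bound~(\ref{fDelta}) and the index rearrangement carry over verbatim, and the upper bound $m_f(r,G_j)\le \Delta T_f(r)+O(1)$ follows either from the several-variable first main theorem of \cite{CY} or, for $r\ge 1$, simply from the monotonicity of $r\mapsto |G_j\circ f|_r$, which gives $\log|G_j\circ f|_r\ge \log|G_j\circ f|_1$. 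I would therefore either lead with that direct several-variable argument, or, if you insist on slicing, make the $r$-dependence of the direction explicit and verify the uniformity of the error terms rather than asserting a single generic $\u_0$.
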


In \cite{C}, Cherry found it interesting to look at maps from
$\K\setminus\{0\},$ rather than from all of $\K.$
An analytic function $f(z)$ on $\K\setminus\{0\}$ is represented by
a Laurent series
$$
    \sum_{j=-\infty}^\infty a_j z^j,
$$
and for $r>0,$ we define
$$
    |f|_r = \sup_j |a_j| r^j
    = \sup \{|f(z)| : z \in \K \textnormal{~with~} |z|=r\}.
$$
A non-constant analytic curve $f:\K\setminus\{0\}\rightarrow\PP^N(\K)$
is defined by a reduced representation $(f_0,\dots,f_N),$
where $f_j$ are analytic on $\K\setminus\{0\},$ and where at least
one of the $f_j$ is not constant.  As before, set
$$
    ||f||_r = \max\{|f_0|_r,\dots,|f_N|_r\}.
$$
However, note  this time that changing the reduced representation
of $f$ will change $||f||_r$ possibly by a power of $r$ and a
multiplicative constant.

For the Nevanlinna functions, we only consider $r\ge1.$
For the characteristic function, set
$$
    T_f(r)=\log ||f||_r + \log ||f||_{r^{-1}} - 2\log ||f||_1,
$$
which is independent of the choice of reduced representation for $f.$
For $Q$ a homogeneous form defining a hypersurface $D$ of degree $d,$
define the counting function by
$$
    N_f(r,D)=N_f(r,Q) = \sum_{r^{-1}\le |z| \le r}
    \ord_z(Q\circ f)(\log r -|\log z|),
$$
and the proximity function by
$$
    m_f(r,D)=m_f(r,Q) = \log\frac{||f||_r^d}{|Q\circ f|_r}
    + \log\frac{||f||_{r^{-1}}^d}{|Q\circ f|_{r^{-1}}}
    -2\log\frac{||f||_1^d}{|Q\circ f|_1}.
$$

\begin{theorem}Let $X\subset \PP^N(\K)$
be a projective sub-variety of dimension $n\ge 1$ over $\K$.
Let $D_1, .., D_q$ be hypersurfaces in  $\PP^N(\K)$
in general position with $X$. Let $f:\K\setminus\{0\}\to X$ be a
non-constant analytic map whose image is not completely contained
in any of the hypersurfaces $D_1,\dots,D_q.$
Then, for any real number $r>1,$
$$\sum_{j=1}^q\frac{m_f(r, D_j)}{\deg D_j}\leq 2nT_f(r) +O(1),$$
where $O(1)$ is a constant independent of $r$.
\end{theorem}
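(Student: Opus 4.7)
The plan is to run the Main Theorem's argument at two radii simultaneously, exploiting the symmetric three-term structure of the Laurent-series Nevanlinna functions. Let $\Delta$ and $G_i = Q_i^{\Delta/d_i}$ be chosen exactly as in the proof of the Main Theorem. The key observation is that the Nullstellensatz inequality
$$
    ||f||_\rho^{\Delta}\le C\max\{|G_{i_0}\circ f|_\rho,\dots, |G_{i_n}\circ f|_\rho\}
$$
for any $n+1$ of the $G_i$'s in general position with $X$ is an algebraic consequence of identities of the form $x_k^{m_k}=\sum_{j=0}^{n} b_{ij\,k}(x)G_{i_j}(x)$ on $X$, pulled back via $f$; it is therefore valid for any $\rho>0$ and any reduced representation of $f$, whether $f$ is entire on $\K$ or only analytic on $\K\setminus\{0\}$.

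Expanding each proximity function using its three-term definition, write
$$
    \sum_{j=1}^q m_f(r,G_j) = S(r)+S(r^{-1})-2S(1),\qquad
    S(\rho):=\sum_{j=1}^q \log\frac{||f||_\rho^{\Delta}}{|G_j\circ f|_\rho}.
$$
The quantity $S(1)$ is a constant independent of $r$, so $-2S(1)=O(1)$. To bound $S(r)$, reorder indices so $|G_1\circ f|_r\le\cdots\le|G_q\circ f|_r$ and apply the Nullstellensatz inequality to conclude that the top $q-n$ summands are each $\le\log C$. Each of the remaining $n$ summands $\log(||f||_r^{\Delta}/|G_j\circ f|_r)$ can be bounded above by $m_f(r,G_j)+O(1)$: the companion term $\log(||f||_{r^{-1}}^{\Delta}/|G_j\circ f|_{r^{-1}})$ is $\ge -O(1)$ by the uniform polynomial estimate $|G_j\circ f|_\rho\le C_j||f||_\rho^{\Delta}$ valid at every $\rho$, and $-2\log(||f||_1^{\Delta}/|G_j\circ f|_1)$ is itself a constant. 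The First Main Theorem in the Laurent-annulus setting---a consequence of the non-Archimedean Poisson--Jensen formula as used in~\cite{C}---then gives $m_f(r,G_j)\le\Delta T_f(r)+O(1)$, so $S(r)\le n\Delta T_f(r)+O(1)$. The identical argument applied with $r$ replaced by $r^{-1}$ (and a possibly different reordering) produces $S(r^{-1})\le n\Delta T_f(r)+O(1)$.

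Summing the two bounds yields $\sum_j m_f(r,G_j)\le 2n\Delta T_f(r)+O(1)$; dividing by $\Delta$ completes the proof. The main obstacle is verifying that the First Main Theorem holds in the Laurent-annulus conventions with the counting weights $(\log r-|\log|z||)$ so that $N_f(r,G_j)\ge 0$, allowing the passage from the Poisson--Jensen equality to the inequality $m_f(r,G_j)\le\Delta T_f(r)+O(1)$. The factor of $2$ in the final bound is forced by the symmetric construction of the Laurent characteristic and proximity functions from data at radii $r$ and $r^{-1}$: the Nullstellensatz/First Main Theorem argument must be paid for at both radii independently.
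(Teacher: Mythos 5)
Your proposal is correct and follows essentially the same route as the paper: the paper's proof is a short sketch saying that one runs the Main Theorem's Nullstellensatz argument at the two radii $r$ and $r^{-1}$ simultaneously, possibly with different index orderings, so that in the worst case $2n$ proximity terms are needed, and it cites Cherry's thesis for the First Main Theorem in the $\K\setminus\{0\}$ setting. Your decomposition $\sum_j m_f(r,G_j)=S(r)+S(r^{-1})-2S(1)$, the uniform lower bound $|G_j\circ f|_\rho\le C_j\|f\|_\rho^{\Delta}$, and the nonnegativity of the annulus counting function are exactly the details that formalize that sketch, so this is the same argument carried out more explicitly.
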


\begin{proof} The proof is essentially the same as for the main theorem.
The only difference is that now we are working simultaneously
on the radii $r$ and $r^{-1}.$  In the proof of the main theorem,
we arranged $|G_j\circ f|_r$ in increasing order in~(\ref{indexorder}).
We may not be able
to use the same index order for $|G_j\circ f|_{r^{-1}}.$  Thus, in the
worst case, we may need to use $2n$ distinct proximity functions,
and hence we get $2n$ instead of $n$ on the right hand side of the
inequality.

For the first main theorem in this context, see \cite[Theorem~III.4.7]{C}.
\end{proof}

\section{Concluding Remarks}
The first main theorem and the fact that the proximity functions
are always bounded below imply that if $f$ is a non-constant
analytic curve not completely contained in a projective
hypersurface $D,$ then
$$
    0\le \delta_f(D) \le 1.
$$
Moreover, $\delta_f(D)=1$ if the image of $f$ omits $D.$ Thus,
Corollary~\ref{cor1} implies that a non-Archimedean analytic curve
in a projective variety $X$ can omit at most $\dim X$ very ample
divisors in general position.

When $X=\PP^N(\K)$ and the $D_j$ are hyperplanes in general
position, simple examples show the main results here are optimal,
even if $f$ is assumed linear non-degenerate, \textit{cf.,}
\cite{CherryYeTrans}.  In the complex case, one conjectures, \textit{cf.,}
\cite[Conjecture VII.5.1]{L}, that for $X$ smooth, for $D_j$
smooth with at worst normal crossing singularities at their
intersections, and for algebraically non-degenerate holomorphic
curves, one need not divide by $\deg D_j$ on the left hand side of
the inequality, and in place of $\dim X$ on the right hand side,
one has the degree of the anti-canonical bundle with respect to an
ample divisor.  Note that when $X=\PP^N(\mathbb{C}),$ one has
$N+1,$ the degree of the anti-canonical bundle, rather than $N,$
on the right hand side of the optimal inequality.

In light of the above cited conjecture over the complex numbers,
it would also be interesting if one could prove an inequality
involving the degree of the canonical bundle of $X$ (over
characteristic zero ground fields) or where one does not need to
divide by the degree of the hypersurfaces on the left hand side.

It might also be interesting to develop a Nevanlinna theory from
domains of the form $\K\setminus\{a_1,\dots,a_m\},$ generalizing
the $\K\setminus\{0\}$ case treated here.  One might also want to
distinguish between essential and non-essential singularities at
each puncture.

Finally, it would be interesting to obtain an inequality of second
main theorem type for non-Archimedean analytic curves into
arbitrary varieties that contains some sort of ramification term,
such as by using truncated counting functions.  We plan to address
this in a future work.

\bibliographystyle{amsalpha}

\end{document}